\newcommand{\bmX}{\bm{X}}
\newcommand{\bmx}{\bm{x}}
\newcommand{\bmth}{\bm{\theta}}
\newcommand{\bmSi}{\bm{\Sigma}}
\newcommand{\tran}{^{\mkern-1.5mu\mathsf{T}}}
\newcommand{\bmde}{\bm{\delta}}
\newcommand{\bmI}{\bm{I}}
\newcommand{\rd}{\mathrm{d}}
\newcommand{\bmzero}{\bm{0}}
\newcommand{\mymid}{\,|\,}
\newcommand{\bmG}{\bm{G}}
\DeclareMathOperator*{\argmax}{arg\,max}
\newcommand{\bmt}{\bm{t}}
\def\citeapos#1{\citeauthor{#1}'s (\citeyear{#1})}
\numberwithin{equation}{section}
\theoremstyle{plain}
\newtheorem{thm}{Theorem}[section]
\newtheorem{corollary}{Corollary}[section]
\theoremstyle{remark}
\newtheorem{remark}{Remark}[section]
\theoremstyle{definition}
\begin{document}

\begin{frontmatter}
\title{Ensemble minimaxity of James-Stein estimators}
\runtitle{Ensemble minimaxity}

\begin{aug}
\author{\fnms{Yuzo} \snm{Maruyama}
\ead[label=e1]{maruyama@port.kobe-u.ac.jp}},
\author{\fnms{Lawrence D.} \snm{Brown}}
\and
\author{\fnms{Edward I.} \snm{George}
\ead[label=e2]{edgeorge@wharton.upenn.edu}}

\address{Kobe University and University of Pennsylvania \\
\printead{e1,e2}}

\runauthor{Y.~Maruyama et al.}


\end{aug}

\begin{abstract}
 This article discusses estimation of a multivariate normal mean based on heteroscedastic observations.
 Under heteroscedasticity, estimators shrinking more on the coordinates with larger variances,
 seem desirable. Although they are not necessarily minimax in the ordinary sense, we
show that such James-Stein type estimators can be {\itshape ensemble minimax},
 minimax with respect to the ensemble risk, related to empirical Bayes perspective of
 Efron and Morris.
\end{abstract}
\begin{keyword}[class=AMS]
\kwd[Primary ]{62C20}
\kwd[; secondary ]{62J07}
\end{keyword}

\begin{keyword}
\kwd{ensemble minimaxity}
\kwd{minimaxity}
\kwd{Stein estimation}
\end{keyword}

\end{frontmatter}

\section{Introduction}
Let $\bmX\sim N_p(\bmth,\bmSi)$ where $p\geq 3$, $\bmth=(\theta_1,\dots,\theta_p)\tran $ and
$\bmSi=\mathrm{diag}(\sigma^2_1,\dots,\sigma^2_p)$.
Let us assume 
\begin{equation}\label{sigma_descending}
 \sigma_1^2> \sigma^2_2 > \dots > \sigma^2_p.
\end{equation}
We are interested in the estimation of $\bmth$
with respect to the ordinary squared error loss function
\begin{equation}\label{ordinary_squared_error_loss}
 L(\bmde,\bmth)=\|\bmde-\bmth\|^2,
\end{equation}
where the risk of an estimator $\bmde(\bmX)$ is $ R(\bmde,\bmth)= E\left[ L(\bmde,\bmth)\right]$.
The MLE $\bmX$ with constant risk $\sum \sigma^2_i$ is shown to be extended Bayes
and hence minimax for any $p$ and any $\bmSi$.

In the homoscedastic case $ \sigma_1^2= \dots = \sigma^2_p$,
\cite{James-Stein-1961} showed that
the shrinkage estimator
\begin{equation}\label{JS-original}
 \left(1-\frac{c}{\bmX\tran \bmSi^{-1}\bmX}\right)\bmX\text{ for }c\in\left(0,2(p-2)\right)
\end{equation}
dominates the MLE $\bmX$ for $p\geq 3$.
There is some literature discussing the minimax properties of 
shrinkage estimators under heteroscedasticity.
\cite{Brown-1975}
showed that the James-Stein estimator \eqref{JS-original} is not necessarily minimax
when the variances are not equal.
Specifically, it is not minimax for any $c\in\left(0,2(p-2)\right)$
when $2\sigma^2_1>\sum_{i=1}^p\sigma^2_i$.
\cite{Berger-1976} showed that
\begin{equation}\label{JS-variant-1}
 \left(\bmI-\bmSi^{-1}\frac{c}{\bmX\tran \bmSi^{-2}\bmX}\right)\bmX\text{ for }c\in\left(0,2(p-2)\right)
\end{equation}
is minimax for $p\geq 3$ and any $\bmSi$. 
However, \cite{Casella-1980} argued that the James-Stein estimator \eqref{JS-variant-1}
may not be desirable even if it is minimax.
Ordinary minimax estimators, as in \eqref{JS-variant-1},
typically shrink most on the coordinates with smaller variances.
From \citeapos{Casella-1980} viewpoint, 
one of the most natural Jame-Stein variants is
\begin{equation}\label{JS-variant-2}
 \left(\bmI-\bmSi\frac{c}{\|\bmX\|^2}\right)\bmX\text{ for }c>0,
\end{equation}
which we are going to rescue, by providing some minimax properties related to Bayesian viewpoint.

In many applications, $\theta_i$ are thought to follow some exchangeable prior
distribution $\pi$. It is then natural to consider the compound risk function
which is then the Bayes risk with respect to the prior $\pi$
\begin{align}\label{eq:Bayes_risk_0}
 \bar{R}(\bmde,\pi)
=\int_{\mathbb{R}^p} R(\bmth,\bmde)\pi(\rd \bmth).
\end{align}
\cite{Efron-Morris-1971, Efron-Morris-1972-biometrika, Efron-Morris-1972-jasa, Efron-Morris-1973-jasa} addressed this problem from both the Bayes and empirical Bayes perspective.
In particular, they considered a prior distribution 
$\bmth\sim N_p(\bmzero,\tau\bmI_p)$ with
$\tau \in (0,\infty)$, and used the term  ``ensemble risk'' for the compound risk.
By introducing a set of ensemble risks
\begin{equation}\label{eq:Bayes_risk}
 \bar{R}(\bmde,\tau)=\int_{\mathbb{R}^p} R(\bmde,\bmth)\frac{1}{(2\pi\tau)^{p/2}}
  \exp\left(-\frac{\|\bmth\|^2}{2\tau}\right)\rd \bmth,
\end{equation}
we can define ensemble minimaxity with respect to a set of priors
\begin{equation}\label{P_star}
 \mathcal{P}_\star =\{N_p(\bm{0},\tau \bm{I}_p):\tau\in(0,\infty)\},
\end{equation}
that is, an estimator $\bmde$ is said to be ensemble minimax with respect to $\mathcal{P}_\star$ if
\begin{equation}\label{em_P_*}
 \sup_{\tau\in(0,\infty)}\bar{R}(\bmde,\tau)=
 \inf_{\bmde'}\sup_{\tau\in(0,\infty)}\bar{R}(\bmde',\tau).
\end{equation}
As a matter of fact, the second author in his unpublished manuscript,
\cite{Brown-ensemble-2011}, has already introduced the concept of ensemble minimaxity.
In this article, we follow their spirit but propose a simpler and clearer approach for 
establishing ensemble minimaxity of estimators.

Our article is organized as follows. In Section \ref{sec:em},
we elaborate the definition of ensemble minimaxity and
explain \citeapos{Casella-1980} viewpoint on the contradiction between minimaxity and
well-conditioning.
In Section \ref{sec:main}, we 
show the ensemble minimaxity of various shrinkage estimators
including a variant of the James-Stein estimator
\begin{equation}\label{intro.eq:nice_js}
 \left(\bmI-\bmSi\frac{p-2}{(p-2)\sigma^2_1+\|\bmX\|^2}\right)\bmX
\end{equation}
as well as the generalized Bayes estimator with respect to
the hierarchical prior
\begin{equation}\label{eq:gharmonic_intro}
 \bm{\theta}\mymid\lambda \sim N_p(\bm{0},(\sigma^2_1/\lambda)\bm{I}-\bm{\Sigma}), \ \pi(\lambda) \sim
\lambda^{-2}I_{(0,1)}(\lambda)
\end{equation}
which is a generalization of the harmonic prior $\|\bmth\|^{2-p}$ for the heteroscedastic case.

\section{Minimaxity, Ensemble Minimaxity and Casella's viewpoint}
\label{sec:em}
If the prior $\pi(\bmth)$ were known,
the resulting posterior mean $E[\bmth\mymid \bmx]$ would then be the
optimal estimate under the sum of the squared error loss.
However, it is typically not feasible to exactly specify the prior.
One approach to avoid excessive dependence on the choice of prior,
is to consider a set of priors $\mathcal{P}$ on $\Theta$ and 
study the properties of estimators based on the corresponding set
of ensemble risks.
As in classical decision theory, there rarely exists an
estimator that achieves the minimum ensemble risk uniformly for all $\pi\in\mathcal{P}$.
A more realistic goal as pursued in this paper is to study the ensemble
minimaxity of James-Stein type estimators.

Recall that with ordinary risk $R(\bmde,\bmth)$,
$\bmde$ is said to be minimax if 
\begin{align}
 \sup_{\bmth\in\Theta}R(\bmde,\bmth)
 =\inf_{\bmde'}\sup_{\bmth\in\Theta}R(\bmde',\bmth).
\end{align}
Similarly for the case of ensemble risk we have the following definition.
Note the Bayes risk of $\bmde$ under the prior $\pi$ is given by \eqref{eq:Bayes_risk_0}.
The estimator $\bmde$ is said to be ensemble minimax with respect to $\mathcal{P}$ if 
\begin{align}
 \sup_{\pi\in\mathcal{P}}\bar{R}(\bmde,\pi)=
 \inf_{\bmde'}\sup_{\pi\in\mathcal{P}}\bar{R}(\bmde',\pi).
\end{align}

The motivation for the above definitions comes from the use of the empirical Bayes method
in simultaneous inference. \cite{Efron-Morris-1972-jasa}, 
derived the James-Stein estimator through the parametric empirical Bayes model with
$\bmth\sim N_p(\bmzero,\tau \bmI_p)$.
Note that in such an empirical Bayes model, $\tau$ is the unknown non-random parameter.
Given the family $\mathcal{P}_\star =\{N_p(\bm{0},\tau \bm{I}_p):\tau\in(0,\infty)\}$,
the Bayes risk is a function of $\tau$ as follows,
\begin{equation}\label{eq:Bayes_risk_1}
 \bar{R}(\bmde,\tau)=\int_{\mathbb{R}^p} R(\bmde,\bmth)\frac{1}{(2\pi\tau)^{p/2}}
  \exp\left(-\frac{\|\bmth\|^2}{2\tau}\right)\rd \bmth.
\end{equation}
Hence, with $ \bar{R}(\bmde,\tau)$, the estimator $\bmde$ is said to be ensemble minimax with respect to $\mathcal{P}_\star$ if
\begin{equation}\label{em_P_*_1}
 \sup_{\tau\in(0,\infty)}\bar{R}(\bmde,\tau)=
 \inf_{\bmde'}\sup_{\tau\in(0,\infty)}\bar{R}(\bmde',\tau),
\end{equation}
which may be seen as the counterpart of ordinary minimaxity in the empirical Bayes model.

Clearly the usual estimator $\bmX$ has constant risk, has constant Bayes risk and hence
$\bmX$ is ensemble minimax. Then the ensemble minimaxity of $\bmde$ follows if
\begin{align*}
 \bar{R}(\bmde,\tau)\leq \sum_{i=1}^p\sigma^2_i, \ \forall \tau\in (0,\infty).
\end{align*}

\begin{remark}
Note that ensemble minimaxity can also be interpreted as a particular
case of Gamma minimaxity studied in the context of robust Bayes analysis
by \cite{Good-1952, Berger_L-1979}.
However, in such studies, a ``large'' set consisting of many diffuse priors
are usually included in the analysis.
Since this is quite different from our formulation of the problem,
we use the term ensemble minimaxity throughout our paper,
following the Efron and Morris papers cited above.
\end{remark}

\medskip

A class of shrinkage estimators which we consider in this paper, is given by
\begin{equation}\label{phiphiphi}
\bmde_{\phi}= \left(\bmI -\bmG
	       \frac{\phi(z)}{z}\right)\bmx, \
\mbox{ for }z=\bmx\tran \bm{G}\bmSi^{-1}\bmx=\sum \frac{g_ix_i^2}{\sigma^2_i},
\end{equation}
where $\bm{G}=\mbox{diag}(g_1,\dots,g_p)$ with
\begin{equation*}
 0<g_i\leq 1, \ \forall i.
\end{equation*}
\cite{Berger-Srinivasan-1978} showed, in their Corollary 2.7,
that, given positive-definite $\bm{C}$ and non-singular $\bm{B}$,
a necessary condition for an estimator of the form
\begin{align*}
 \left(\bmI -\bm{B}\frac{\phi(\bmx\tran \bm{C}\bmx)}{\bmx\tran \bm{C}\bmx}\right)\bmx 
\end{align*}
to be admissible is $\bm{B}= k\bmSi\bm{C}$ for some constant $k$,
which is satisfied by estimators among the class of \eqref{phiphiphi}.

A version of \citeapos{Baranchik-1964} sufficient condition for ordinary minimaxity
is given in Appendix \ref{sec:ordinary}; For given $\bmG$ which satisfies
\begin{align*}
 h(\bmSi,\bmG)=2\left(\frac{\sum g_i\sigma^2_i}{\max (g_i\sigma^2_i)}-2\right)>0,
\end{align*}
$\bmde_{\phi}$ given by \eqref{phiphiphi} is ordinary minimax if
\begin{equation}\label{suffi.condi.ordi.minimax}
 \phi(\cdot) \text{ is non-decreasing and }0\leq \phi \leq h(\bmSi,\bmG).
\end{equation}
\cite{Berger-1976} showed that, for any given $\bmSi$,
\begin{align*}
 \max_{\bmG}h(\bmSi,\bmG)=2(p-2),\quad \argmax_{\bmG}h(\bmSi,\bmG)=\sigma^2_p\bmSi^{-1}=\mathrm{diag}\left(\frac{\sigma^2_p}{\sigma_1^2}, \dots, \frac{\sigma^2_p}{\sigma_{p-1}^2},1\right)
\end{align*}
which seems the right choice of $\bmG$.
However, from the ``conditioning'' viewpoint of \cite{Casella-1980}
which advocates more shrinkage on higher variance estimates, the descending order
\begin{equation}\label{eq:ascend}
 g_1>\dots >g_p
\end{equation}
is desirable, whereas
$ \bmG=\sigma^2_p\bmSi^{-1}$ corresponding to the ascending order
$g_1<\dots <g_p$ under $\bmSi$ given by \eqref{sigma_descending}.
As \cite{Casella-1980} pointed out,
ordinary minimaxity cannot be enjoyed together with well-conditioning given by \eqref{eq:ascend}
when
\begin{equation*}
 h(\bmSi,c\bmI)\leq 0 \text{ or equivalently }\sum\sigma^2_i\leq 2\sigma^2_1
\end{equation*}
for some $0<c\leq 1$.
In fact, when $  h(\bmSi,c\bmI)\leq 0$ and $ c=g_1>\dots >g_p$, 
we have
\begin{align*}
 c\sigma^2_1=g_1\sigma^2_1, \ 
 c\sigma^2_2>g_2\sigma^2_2, \ \dots, \ c\sigma^2_p>g_p\sigma^2_p
\end{align*}
and hence $ h(\bmSi,\bmG)<0$ follows.
The motivation of \cite{Casella-1980,Casella-1985} seems to provide a better
treatment for the case.
Actually \cite{Brown-1975} pointed out
essentially the same phenomenon from a slightly different viewpoint.

Ensemble minimaxity, based on ensemble risk given by \eqref{eq:Bayes_risk},
provides a way of saving shrinkage estimators with well-conditioning,
estimators which are not necessarily ordinary minimax.

 \section{Ensemble minimaxity}
\label{sec:main}
\subsection{A general theorem}
\label{subsec:general}
We have the following theorem on ensemble minimaxity of $ \bmde_{\phi}$ with general $\bmG$, though
we will eventually focus on $ \bmde_{\phi}$ with $\bmG$ with 
the descending order $g_1>\dots >g_p$ as in \eqref{eq:ascend}. 
\begin{thm}
  Assume $\phi(z)$ is non-negative, non-decreasing and concave.
  Also $\phi(z)/z$ is assumed non-increasing.
  Then
\begin{equation*}
\bmde_{\phi}= \left(\bmI -\bmG
	       \frac{\phi(z)}{z}\right)\bmx, \
\mbox{ for }z=\bmx\tran \bm{G}\bmSi^{-1}\bmx=\sum \frac{g_ix_i^2}{\sigma^2_i}
\end{equation*}
  is ensemble minimax if
\begin{equation}\label{AAA}
\phi(p\min_ig_i(1+\tau/\sigma^2_i)) \leq 2(p-2)
\frac{\min_ig_i(1+\tau/\sigma^2_i)}{\max_ig_i(1+\tau/\sigma^2_i)}, \quad \forall \tau\in(0,\infty).
\end{equation}
\end{thm}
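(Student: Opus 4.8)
The plan is to reduce the claim to the one-sided inequality $\bar R(\bmde_\phi,\tau)\le\sum_i\sigma^2_i$ for every $\tau\in(0,\infty)$; this is enough because $\bmX$ is ensemble minimax with constant ensemble risk $\sum_i\sigma^2_i$. Under $\bmth\sim N_p(\bmzero,\tau\bmI)$ the marginal law of $\bmX$ is $N_p(\bmzero,\bmSi+\tau\bmI)$ and the posterior mean is $\bmde^{\pi}(\bmX)=(\bmI-\bmSi(\bmSi+\tau\bmI)^{-1})\bmX$. By the orthogonal decomposition of Bayes risk, $\bar R(\bmde,\tau)=E_m\|\bmde(\bmX)-\bmde^{\pi}(\bmX)\|^2+\text{const}$ (with $E_m$ the marginal expectation), so it suffices to prove $E_m[\|\bmde_\phi-\bmde^\pi\|^2-\|\bmX-\bmde^\pi\|^2]\le0$. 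Writing $\bmde_\phi-\bmde^\pi=(\bmX-\bmde^\pi)-\bmG\,\phi(z)z^{-1}\bmX$ and expanding the squared norms, this difference equals $E_m[-2z^{-1}\phi(z)A+z^{-2}\phi^2(z)B]$, where $A=\bmX\tran\bmG\bmSi(\bmSi+\tau\bmI)^{-1}\bmX=\sum_i g_i\sigma^2_i(\sigma^2_i+\tau)^{-1}X_i^2$ and $B=\bmX\tran\bmG^2\bmX=\sum_ig_i^2X_i^2$.

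Next I would standardize with $Y_i=X_i/\sqrt{\sigma^2_i+\tau}$, so $\bmY\sim N_p(\bmzero,\bmI)$; then $z=\sum_iw_iY_i^2$ with $w_i=g_i(1+\tau/\sigma^2_i)$, while $A=\sum_i\beta_iY_i^2$ and $B=\sum_iw_i\beta_iY_i^2$ with $\beta_i=g_i\sigma^2_i$. The decisive step is a polar decomposition: with $S=\|\bmY\|^2\sim\chi^2_p$ and $V_i=Y_i^2/S$, the radius $S$ is independent of the direction $(V_1,\dots,V_p)$, and $z=SL$, $A=SM_1$, $B=SM_2$ where $L=\sum_iw_iV_i$, $M_1=\sum_i\beta_iV_i$, $M_2=\sum_iw_i\beta_iV_i$; note $m:=\min_iw_i\le L\le\max_iw_i=:M$ and $M_2\le MM_1$. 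Conditioning on the direction and substituting $t=SL$ (distributed, given the direction, as $L$ times a $\chi^2_p$) turns the inner expectation into $L^{-1}(-2M_1E_t[\phi(t)]+M_2E_t[\phi^2(t)/t])$.

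The monotonicity assumptions enter here. Since $t\mapsto\phi(t)$ is non-decreasing and $t\mapsto\phi(t)/t$ is non-increasing, the correlation inequality $E[fg]\le E[f]E[g]$ for $f$ non-decreasing and $g$ non-increasing (Chebyshev's sum inequality) gives $E_t[\phi^2(t)/t]=E_t[\phi(t)\cdot\phi(t)/t]\le E_t[\phi(t)]\,E_t[\phi(t)/t]$, so the inner expectation is at most $L^{-1}E_t[\phi(t)](M_2E_t[\phi(t)/t]-2M_1)$; because $E_t[\phi(t)]\ge0$ and $M_2\le MM_1$, the whole thing is $\le0$ once we establish the direction-free bound $E_t[\phi(t)/t]\le2/M$. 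For that, write $t=Ls$ with $s\sim\chi^2_p$: since $L\ge m$ and $u\mapsto\phi(u)/u$ is non-increasing, $\phi(Ls)/(Ls)\le\phi(ms)/(ms)$, hence $E_t[\phi(t)/t]\le m^{-1}E_s[\phi(ms)/s]$. Applying Chebyshev's sum inequality again ($\phi(ms)$ non-decreasing in $s$, $1/s$ non-increasing) and then Jensen's inequality (here concavity of $\phi$ is used), $E_s[\phi(ms)/s]\le E_s[\phi(ms)]\,E_s[1/s]\le\phi(mp)/(p-2)$ by $E[\chi^2_p]=p$ and $E[1/\chi^2_p]=1/(p-2)$ for $p\ge3$. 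Finally \eqref{AAA}, read at this $\tau$ with $m=\min_ig_i(1+\tau/\sigma^2_i)$ and $M=\max_ig_i(1+\tau/\sigma^2_i)$, gives $\phi(mp)\le2(p-2)m/M$, so $E_t[\phi(t)/t]\le m^{-1}\cdot2(p-2)(m/M)/(p-2)=2/M$ as needed; tracing back, $\bar R(\bmde_\phi,\tau)\le\sum_i\sigma^2_i$ for all $\tau$, and $\bmde_\phi$ is ensemble minimax.

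I expect the main obstacle to be the joint treatment of the two quadratic forms $A$ and $B$: bounding each against $z$ in isolation (for instance $A\ge(\min_ic_i)z$ and $B\le(\max_ig_i\sigma^2_i)z$ with $c_i=\sigma^4_i/(\sigma^2_i+\tau)$) only yields a sufficient condition strictly stronger than \eqref{AAA}, so one genuinely needs the polar decomposition to separate radius from direction, together with the two applications of Chebyshev's sum inequality and the one application of Jensen's inequality, in order to land exactly on \eqref{AAA}. The remaining points requiring care are integrability --- finiteness of $E_t[\phi^2(t)/t]$ and of $E[1/\chi^2_p]$, both valid for $p\ge3$ since concavity forces $\phi$ to be at most affine --- and justification of the conditioning, the Fubini interchange, and the change of variables.
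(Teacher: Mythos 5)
Your proof is correct and follows essentially the same route as the paper: the same Bayes-risk difference $-2E[A\phi(z)/z]+E[B\phi^2(z)/z^2]$, the same radius--direction factorization ($\chi^2_p$ radius independent of the Dirichlet-type direction), two applications of the correlation inequality, Jensen's inequality via concavity, and the bounds through $\min_i g_i(1+\tau/\sigma^2_i)$ and $\max_i g_i(1+\tau/\sigma^2_i)$ that land exactly on \eqref{AAA}. The only (cosmetic) difference is that you establish nonpositivity of the expectation conditionally on the direction, whereas the paper compares the unconditional expectations of the two terms; the ingredients and inequalities are identical.
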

\begin{proof}
Recall, for $i=1,\dots,p$,
\begin{align*}
x_i \mymid \theta_i \sim N(\theta_i,\sigma^2_i),\text{ and }\theta_i\sim N(0,\tau).
\end{align*}
Then the posterior and marginal are given by
\begin{align*}
 \theta_i\mymid x_i\sim N\left(\frac{\tau}{\tau+\sigma^2_i}x_i,\frac{\tau\sigma^2_i}{\tau+\sigma^2_i}\right)
\text{ and }x_i\sim N(0,\tau+\sigma^2_i),
\end{align*}
respectively, where $ \theta_1\mymid x_1,\dots,\theta_p\mymid x_p$ are mutually independent and
$x_1,\dots,x_p$ are mutually independent. Then the Bayes risk is given by
\begin{align*}
 \bar{R}(\bmde_{\phi},\tau)&=
 \sum_{i=1}^p E_{\bmth} E_{\bmx\mymid \bmth}
 \left[\left\{\left(1-g_i\frac{\phi(z)}{z}\right)x_i-\theta_i\right\}^2\right] \\
&= \sum_{i=1}^p E_{\bmx} E_{\bmth\mymid \bmx}
 \left[\left\{\left(1-g_i\frac{\phi(z)}{z}\right)x_i-\theta_i\right\}^2\right] \\
&= \sum_{i=1}^p E_{\bmx} E_{\bmth\mymid \bmx}
 \left[\left\{\left(1-g_i\frac{\phi(z)}{z}\right)x_i- E[\theta_i\mymid x_i]+E[\theta_i\mymid x_i]-\theta_i\right\}^2\right] \\
 &= \sum_{i=1}^p E_{\bmx} \left[\left\{\left(1-g_i\frac{\phi(z)}{z}\right)x_i- E[\theta_i\mymid x_i]\right\}^2\right]
 +\sum_{i=1}^p\mathrm{Var}(\theta_i\mymid x_i) \\
 &= \sum_{i=1}^p E_{\bmx} \left[\left(\frac{\sigma^2_i}{\tau+\sigma^2_i}x_i -
 g_i\frac{\phi(z)}{z}x_i\right)^2\right]
 +\sum_{i=1}^p\frac{\tau\sigma^2_i}{\tau+\sigma^2_i}.
\end{align*}
Since the first term of the r.h.s.~of the above equality is rewritten as 
\begin{align*}
& \sum_{i=1}^p E_{\bmx} \left[\left(\frac{\sigma^2_i}{\tau+\sigma^2_i}x_i -
 g_i\frac{\phi(z)}{z}x_i\right)^2\right] \\
&= \sum_{i=1}^p \left(\frac{\sigma^2_i}{\tau+\sigma^2_i}\right)^2E_{\bmx}[x_i^2]
-2E_{\bmx} \left[\sum_{i=1}^p\frac{\sigma^2_ig_ix_i^2}{\tau+\sigma^2_i}
 \frac{\phi(z)}{z}\right]+
 E_{\bmx} \left[\sum_{i=1}^pg^2_ix_i^2\frac{\phi^2(z)}{z^2}\right] \\
 &=\sum_{i=1}^p\frac{\sigma^4_i}{\tau+\sigma^2_i}
-2E_{\bmx} \left[\sum_{i=1}^p\frac{\sigma^2_ig_ix_i^2}{\tau+\sigma^2_i}
 \frac{\phi(z)}{z}\right]+
 E_{\bmx} \left[\sum_{i=1}^pg^2_ix_i^2\frac{\phi^2(z)}{z^2}\right],
\end{align*}
 we have
\begin{equation}\label{eq:bayes_risk_diff}
 \bar{R}(\bmde_{\phi},\tau)-\sum \sigma^2_i 
= -2E_{\bmx} \left[\sum_{i=1}^p\frac{\sigma^2_ig_ix_i^2}{\tau+\sigma^2_i}
 \frac{\phi(z)}{z}\right]+
 E_{\bmx} \left[\sum_{i=1}^pg^2_ix_i^2\frac{\phi^2(z)}{z^2}\right].
\end{equation} 
 Let 
\begin{align*}
 w_i=\frac{x_i^2}{\sigma^2_i+\tau},\  w=\sum_{i=1}^p w_i \ \text{ and } \ t_i=\frac{w_i}{w}\text{ for }i=1,\dots,p.
\end{align*}
Then
\begin{gather*}
w\sim\chi^2_p, \quad
\bmt=(t_1,\dots,t_p)\tran   \sim \mathrm{Dirichlet}(1/2,\dots,1/2),
\end{gather*}
and $w$ and $\bmt$ are mutually independent.
With the notation, we have
\begin{align*}
 x_i^2=wt_i(\sigma^2_i+\tau)\text{ and }z=
\bm{x}\tran \bm{G}\bm{\Sigma}^{-1}\bm{x}=
\sum_{i=1}^p \frac{g_ix_i^2}{\sigma^2_i}=w\sum_{i=1}^p t_ig_i\left(1+\frac{\tau}{\sigma^2_i}\right)
\end{align*}
and hence
\begin{align*}
 E_{\bmx} \left[\sum g^2_ix_i^2\frac{\phi^2(z)}{z^2}\right] 
 &=E_{w,\bmt} \left[\frac{\sum t_ig^2_i(\sigma^2_i+\tau)}
 {\sum t_ig_i(1+\tau/\sigma^2_i)}
\frac{\phi(w\sum t_ig_i(1+\tau/\sigma^2_i))^2}{w\sum t_ig_i(1+\tau/\sigma^2_i)}
 \right] \\
 &=E_{\bmt} \left[\frac{\sum t_ig^2_i(\sigma^2_i+\tau)}
 {\sum t_ig_i(1+\tau/\sigma^2_i)}
E_{w\mymid \bmt}\left[\frac{\phi(w\sum t_ig_i(1+\tau/\sigma^2_i))^2}{w\sum t_ig_i(1+\tau/\sigma^2_i)}
 \right] \right].
 \end{align*}
 Since $\phi(w)/w$ is non-increasing and $\phi(w)$ is non-decreasing,
 by the correlation inequality, we have
 \begin{align*}
  &  E_{w\mymid \bmt}\left[
  \frac{\phi(w\sum t_ig_i(1+\tau/\sigma^2_i))^2}{w\sum t_ig_i(1+\tau/\sigma^2_i)}\right] \\
&\leq   
 E_{w\mymid \bmt}\left[\phi(w\sum t_ig_i(1+\tau/\sigma^2_i))\right]
 E_{w\mymid \bmt}\left[\frac{\phi(w\sum t_ig_i(1+\tau/\sigma^2_i))}{w\sum t_ig_i(1+\tau/\sigma^2_i)}\right] \\
&\leq  E_{w\mymid \bmt}\left[\phi(w\sum t_ig_i(1+\tau/\sigma^2_i))\right]
 E_{w}\left[
 \frac{\phi(w\min g_i(1+\tau/\sigma^2_i))}{w \min g_i(1+\tau/\sigma^2_i)}
 \right],
 \end{align*}
 and hence
 \begin{equation}\label{inequ.proof}
\begin{split}
 &   E_{\bmx} \left[\sum g^2_ix_i^2\frac{\phi^2(z)}{z^2}\right]  
\leq E_{w}\left[
 \frac{\phi(w\min g_i(1+\tau/\sigma^2_i))}{w \min g_i(1+\tau/\sigma^2_i)}
  \right] \\
&\qquad \qquad \qquad \times E_{\bmt} \left[\frac{\sum t_ig^2_i(\sigma^2_i+\tau)}
 {\sum t_ig_i(1+\tau/\sigma^2_i)}
E_{w\mymid \bmt}\left[\phi(w\sum t_ig_i(1+\tau/\sigma^2_i)) \right] \right].  
\end{split}
 \end{equation}
 In the first part of the r.h.s.~of the inequality \eqref{inequ.proof}, we have
\begin{equation}\label{eq:first}
\begin{split}
E_{w}\left[ \frac{\phi(w\min g_i(1+\tau/\sigma^2_i))}{w} \right]
 &
 \leq E_w[1/w]E_w\left[\phi(w\min_i g_i(1+\tau/\sigma^2_i))\right] \\
&\leq \frac{\phi(E_w\left[w\right]\min_i g_i(1+\tau/\sigma^2_i))}{p-2} \\
&= \frac{\phi(p\min_i g_i(1+\tau/\sigma^2_i))}{p-2},
\end{split}
\end{equation}
where the first and second inequality follow from the correlation inequality and Jensen's inequality, respectively.
 In the second part of the r.h.s.~of the inequality \eqref{inequ.proof},
by the inequality
\begin{align*}
 \sum t_ig^2_i(\sigma^2_i+\tau)\leq \max_i g_i(1+\tau/\sigma_i^2) \sum t_ig_i\sigma^2_i,
\end{align*}
we have
 \begin{equation}\label{eq:second}
\begin{split}
 & E_{\bmt} \left[\frac{\sum t_ig^2_i(\sigma^2_i+\tau)}
 {\sum t_ig_i(1+\tau/\sigma^2_i)}
 E_{w\mymid \bmt}\left[\phi(w\sum t_ig_i(1+\tau/\sigma^2_i))\right]\right] \\
& = E_{w,\bmt}\left[\frac{\sum t_ig^2_i(\sigma^2_i+\tau)}
 {\sum t_ig_i(1+\tau/\sigma^2_i)}
\phi(w\sum t_ig_i(1+\tau/\sigma^2_i))\right]\\
 & \leq \max_i g_i(1+\tau/\sigma_i^2)
 E_{w,\bmt}\left[\frac{\sum t_ig_i\sigma^2_i}
 {\sum t_ig_i(1+\tau/\sigma^2_i)}
 \phi(w\sum t_ig_i(1+\tau/\sigma^2_i))\right] \\
 &=\max_i g_i(1+\tau/\sigma_i^2)
E_{\bmx} \left[\sum\frac{\sigma^2_ig_ix_i^2}{\tau+\sigma^2_i}
 \frac{\phi(z)}{z}\right].
\end{split}
 \end{equation}
By \eqref{inequ.proof}, \eqref{eq:first} and \eqref{eq:second}, we have
\begin{equation}\label{eq:-1}
 \begin{split}
&E_{\bmx} \left[\sum g^2_ix_i^2\frac{\phi^2(z)}{z^2}\right] \\  
  &\leq \frac{\phi(p\min_i g_i(1+\tau/\sigma^2_i))}{p-2}
  \frac{\max_i g_i(1+\tau/\sigma^2_i)}{\min_i g_i(1+\tau/\sigma^2_i)}
E_{\bmx} \left[\sum\frac{\sigma^2_ig_ix_i^2}{\tau+\sigma^2_i}
 \frac{\phi(z)}{z}\right],  
 \end{split}
\end{equation} 
and, by \eqref{eq:bayes_risk_diff} and \eqref{eq:-1},
\begin{align*}
& \bar{R}(\bmde_{\phi},\tau)-\sum \sigma^2_i \\
&\leq 
\left(\frac{\phi(p\min_i g_i(1+\tau/\sigma^2_i))}{p-2}
  \frac{\max_i g_i(1+\tau/\sigma^2_i)}{\min_i g_i(1+\tau/\sigma^2_i)}-2\right)
 E_{\bmx} \left[\sum\frac{\sigma^2_ig_ix_i^2}{\tau+\sigma^2_i}
 \frac{\phi(z)}{z}\right],
\end{align*}
 which guarantees $ \bar{R}(\bmde_{\phi},\tau)-\sum \sigma^2_i \leq 0$ for all $\tau\in(0,\infty)$
 under the condition \eqref{AAA}.
\end{proof}

Given $\bmSi$, the choice $\bm{G}=\bm{\Sigma}/\sigma^2_1$ 
with descending order $g_1>\dots>g_p$,
is one of the most natural choice of $\bmG$ from \citeapos{Casella-1980} viewpoint.
In this case,
we have
\begin{gather*}
\frac{\min_ig_i(1+\tau/\sigma^2_i)}{\max_ig_i(1+\tau/\sigma^2_i)}
=\frac{\min_i \{\sigma_i^2+\tau\}}
{\max_i\{\sigma_i^2+\tau\}}
=\frac{\sigma_p^2+\tau}{\sigma_1^2+\tau}, \\
 p\min_ig_i(1+\tau/\sigma^2_i)=\frac{p}{\sigma^2_1}\min_i(\sigma^2_i+\tau)=p\frac{\sigma^2_p+\tau}
 {\sigma^2_1},
\end{gather*}
and hence
a following corollary.
\begin{corollary}\label{cor:1}
Assume that 
$\phi(z)$ is non-negative, non-decreasing and concave. Also  $\phi(z)/z$ is
assumed non-increasing.
 Then
\begin{align*}
 \bm{\delta}_\phi=\left(\bmI-\bmSi\frac{\phi(\|\bmx\|^2/\sigma^2_1)}{\|\bmx\|^2}\right)\bmx
\end{align*}
 is ensemble minimax if 
\begin{equation}\label{simple}
\phi(p(\sigma^2_p+\tau)/\sigma^2_1) \leq 2(p-2)
\frac{\sigma^2_p+\tau}{\sigma^2_1+\tau}\quad, \forall \tau\in(0,\infty).
\end{equation}
\end{corollary}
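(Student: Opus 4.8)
The plan is to obtain Corollary \ref{cor:1} as an immediate specialization of the Theorem to the particular weight matrix $\bmG=\bmSi/\sigma_1^2$, i.e.\ $g_i=\sigma_i^2/\sigma_1^2$. First I would check that this $\bmG$ is an admissible choice in the class \eqref{phiphiphi}: under the ordering \eqref{sigma_descending} we have $0<g_i=\sigma_i^2/\sigma_1^2\le 1$ for every $i$, with equality only at $i=1$, so the hypotheses on $\bmG$ needed by the Theorem are met. The assumptions on $\phi$ (non-negative, non-decreasing, concave, with $\phi(z)/z$ non-increasing) are exactly those of the Theorem and carry over verbatim.

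Next I would verify that $\bmde_\phi$ reduces to the stated estimator. With $g_i=\sigma_i^2/\sigma_1^2$ one computes $z=\bmx\tran\bmG\bmSi^{-1}\bmx=\sum_i g_ix_i^2/\sigma_i^2=\sum_i x_i^2/\sigma_1^2=\|\bmx\|^2/\sigma_1^2$, and then
\begin{align*}
 \bmde_\phi=\left(\bmI-\bmG\frac{\phi(z)}{z}\right)\bmx
 =\left(\bmI-\frac{\bmSi}{\sigma_1^2}\cdot\frac{\phi(\|\bmx\|^2/\sigma_1^2)}{\|\bmx\|^2/\sigma_1^2}\right)\bmx
 =\left(\bmI-\bmSi\frac{\phi(\|\bmx\|^2/\sigma_1^2)}{\|\bmx\|^2}\right)\bmx,
\end{align*}
which matches the estimator in the statement.

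It then remains to translate the sufficient condition \eqref{AAA} into \eqref{simple}. Here $g_i(1+\tau/\sigma_i^2)=(\sigma_i^2+\tau)/\sigma_1^2$, and since $\sigma\mapsto(\sigma^2+\tau)/\sigma_1^2$ is increasing, the ordering \eqref{sigma_descending} gives $\min_i g_i(1+\tau/\sigma_i^2)=(\sigma_p^2+\tau)/\sigma_1^2$ and $\max_i g_i(1+\tau/\sigma_i^2)=(\sigma_1^2+\tau)/\sigma_1^2$, hence the ratio equals $(\sigma_p^2+\tau)/(\sigma_1^2+\tau)$ and $p\min_i g_i(1+\tau/\sigma_i^2)=p(\sigma_p^2+\tau)/\sigma_1^2$ — precisely the two displayed identities stated just before the corollary. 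Substituting these into \eqref{AAA} yields \eqref{simple}, so the Theorem applies and $\bmde_\phi$ is ensemble minimax.

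There is essentially no obstacle in this argument; it is a substitution. The only point that requires a moment's care is the direction of the monotonicity used to identify the $\min$ and $\max$: because the variances are in \emph{descending} order \eqref{sigma_descending}, the minimum of $(\sigma_i^2+\tau)/\sigma_1^2$ over $i$ is attained at $i=p$ and the maximum at $i=1$, which is what makes the bound in \eqref{simple} nontrivial (and, for fixed $\tau$, larger than what a coordinatewise analysis would give). Everything else follows directly from the Theorem.
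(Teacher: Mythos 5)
Your proof is correct and matches the paper's own derivation: the paper also obtains Corollary \ref{cor:1} by specializing the general theorem to $\bmG=\bmSi/\sigma_1^2$, computing $z=\|\bmx\|^2/\sigma_1^2$ and the identities $\min_i g_i(1+\tau/\sigma_i^2)=(\sigma_p^2+\tau)/\sigma_1^2$, $\max_i g_i(1+\tau/\sigma_i^2)=(\sigma_1^2+\tau)/\sigma_1^2$, so that \eqref{AAA} reduces to \eqref{simple}. No gaps; your care about the descending order \eqref{sigma_descending} in identifying the $\min$ and $\max$ is exactly the point the paper relies on.
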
 

  \subsection{An ensemble minimax James-Stein variant}
\label{subsec:js}
As an example of Corollary \ref{cor:1}, we consider
\begin{align}\label{eq:form_stein}
 \phi(z)=\frac{c_1z}{c_2+z}
\end{align}
for $c_1>0$ and $c_2\geq 0$,
which is motivated by \cite{Stein-1956} and \cite{James-Stein-1961}.
Under $\bmSi=\bmI_p$, \cite{Stein-1956} suggested that
there exist estimators dominating the usual estimator $\bmx$
among a class of estimators $\bmde_\phi$ with $\phi$ given by \eqref{eq:form_stein}
for small $c_1$ and large $c_2$.
Following \cite{Stein-1956}, \cite{James-Stein-1961} showed that
$ \bmde_\phi $ with $0<c_1<2(p-2)$ and  $ c_2=0 $ is ordinary minimax.
The choice $ c_2=0 $ is, however, not good since, by Corollary \ref{cor:1}, $c_1$ cannot be larger than
$2(p-2)\sigma^2_p/\sigma^2_1$. With positive $c_2$, we can see that $c_1$ can be much larger as follows.

Note that $\phi(z)$ given by \eqref{eq:form_stein}
is non-negative, increasing and concave and that $ \phi(z)/z$ is decreasing.
Then the sufficient condition in \eqref{simple}
is
\begin{align*}
\frac{c_1p(\sigma^2_p+\tau)/\sigma^2_1}{c_2+p(\sigma^2_p+\tau)/\sigma^2_1} \leq 2(p-2)
\frac{\sigma^2_p+\tau}{\sigma^2_1+\tau}\quad \forall \tau\in(0,\infty),
\end{align*}
which is equivalent to
\begin{align*}
2(p-2)\left\{\sigma^2_1c_2+p(\sigma^2_p+\tau)\right\} -c_1p(\sigma^2_1+\tau)\geq 0
 \quad \forall \tau\in(0,\infty)
\end{align*}
or
\begin{align*}
 p\tau\left\{2(p-2)-c_1\right\}+2(p-2)\sigma^2_1\left\{c_2-p\left(\frac{c_1}{2(p-2)}-\frac{\sigma^2_p}{\sigma^2_1}\right)\right\}\geq 0
 \quad \forall \tau\in(0,\infty).
\end{align*}
Hence we have a following result.
\begin{thm}\label{cor:js}
 \begin{enumerate}
  \item \label{cor:js.1}
When
\begin{equation}\label{eq:cor:js.1}
 0<c_1\leq 2(p-2) \text{ and }c_2\geq \max\left(0, p\left(\frac{c_1}{2(p-2)}-\frac{\sigma^2_p}{\sigma^2_1}\right)\right),
\end{equation}
the shrinkage estimator
\begin{align*}
\left(\bmI-\bmSi\frac{c_1}{c_2\sigma^2_1+\|\bmx\|^2}\right)\bmx 
\end{align*}
is ensemble minimax.
  \item \label{cor:js.2}
	It is ordinary minimax if 
\begin{equation*}
 2\left(\sum \sigma_i^4/\sigma_1^4-2\right)\geq c_1.
\end{equation*}
 \end{enumerate}
\end{thm}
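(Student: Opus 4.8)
The plan is to read off part~\ref{cor:js.1} from Corollary~\ref{cor:1} and part~\ref{cor:js.2} from the Baranchik-type sufficient condition \eqref{suffi.condi.ordi.minimax}; essentially the only work remaining after the statement is bookkeeping, since the computation reducing \eqref{simple} to an affine-in-$\tau$ inequality has already been carried out in the text immediately preceding the theorem.

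For part~\ref{cor:js.1} I would first record that the estimator displayed there is precisely $\bmde_\phi$ for the choice $\bmG=\bmSi/\sigma^2_1$ and $\phi(z)=c_1z/(c_2+z)$, because
\[
\frac{\phi(\|\bmx\|^2/\sigma^2_1)}{\|\bmx\|^2}=\frac{c_1/\sigma^2_1}{c_2+\|\bmx\|^2/\sigma^2_1}=\frac{c_1}{c_2\sigma^2_1+\|\bmx\|^2}.
\]
Writing $\phi(z)=c_1-c_1c_2/(c_2+z)$ one sees at once that $\phi\ge0$, that $\phi'(z)=c_1c_2/(c_2+z)^2\ge0$ and $\phi''(z)=-2c_1c_2/(c_2+z)^3\le0$ on $[0,\infty)$ so that $\phi$ is non-decreasing and concave, and that $\phi(z)/z=c_1/(c_2+z)$ is non-increasing (when $c_2=0$ all of this is trivial, $\phi\equiv c_1$). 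Hence Corollary~\ref{cor:1} applies and it remains to verify \eqref{simple}. As shown just above the statement, clearing denominators turns \eqref{simple} into
\[
p\tau\{2(p-2)-c_1\}+2(p-2)\sigma^2_1\Bigl\{c_2-p\Bigl(\tfrac{c_1}{2(p-2)}-\tfrac{\sigma^2_p}{\sigma^2_1}\Bigr)\Bigr\}\ge 0,\qquad\forall\tau\in(0,\infty).
\]
An affine function of $\tau$ is non-negative on the open half-line exactly when its slope is non-negative and its limiting value at $\tau\downarrow0$ is non-negative, which here reads $c_1\le 2(p-2)$ and $c_2\ge p\bigl(c_1/(2(p-2))-\sigma^2_p/\sigma^2_1\bigr)$; together with the standing hypothesis $c_2\ge0$ this is exactly \eqref{eq:cor:js.1}.

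For part~\ref{cor:js.2} I would invoke \eqref{suffi.condi.ordi.minimax} with the same $\bmG=\bmSi/\sigma^2_1$, so $g_i=\sigma^2_i/\sigma^2_1$ and $g_i\sigma^2_i=\sigma^4_i/\sigma^2_1$. Because $\sigma^2_1$ is the largest variance by \eqref{sigma_descending}, $\max_i g_i\sigma^2_i=\sigma^4_1/\sigma^2_1=\sigma^2_1$, so
\[
h(\bmSi,\bmG)=2\Bigl(\frac{\sum_i g_i\sigma^2_i}{\max_i g_i\sigma^2_i}-2\Bigr)=2\Bigl(\frac{\sum_i\sigma^4_i}{\sigma^4_1}-2\Bigr).
\]
Since $0\le\phi(z)=c_1z/(c_2+z)\le c_1$ for all $z\ge0$ and $\phi$ is non-decreasing, the hypothesis ``$\phi$ non-decreasing and $0\le\phi\le h(\bmSi,\bmG)$'' holds whenever $c_1\le h(\bmSi,\bmG)$, i.e.\ $2(\sum_i\sigma^4_i/\sigma^4_1-2)\ge c_1$; this same inequality gives $h(\bmSi,\bmG)\ge c_1>0$, so the positivity of $h(\bmSi,\bmG)$ required by the appendix lemma is automatic. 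Ordinary minimaxity of $\bmde_\phi$ follows.

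I do not anticipate a genuine analytic obstacle: the substance is entirely contained in Corollary~\ref{cor:1} and in the appendix lemma behind \eqref{suffi.condi.ordi.minimax}, and both reductions here are one-line verifications. The two points that warrant a little care are (i) reading the affine-in-$\tau$ inequality on the \emph{open} half-line, where the endpoint $\tau=0$ enters only through a limit, and (ii) the identification $\max_i g_i\sigma^2_i=\sigma^2_1$, which is where the ordering \eqref{sigma_descending} is used.
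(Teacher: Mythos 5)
Your proposal is correct and follows essentially the same route as the paper: verify that $\phi(z)=c_1z/(c_2+z)$ meets the hypotheses of Corollary \ref{cor:1}, reduce \eqref{simple} to the affine-in-$\tau$ inequality (the same computation the paper performs just before the theorem) to obtain \eqref{eq:cor:js.1}, and deduce part \ref{cor:js.2} from Theorem \ref{thm:ordinary_minimax} with $\bmG=\bmSi/\sigma^2_1$ and $h(\bmSi,\bmG)=2(\sum_i\sigma^4_i/\sigma^4_1-2)$. The only unstated detail is the absolute continuity of $\phi$ required by the appendix theorem, which is immediate here since $\phi$ is smooth.
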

Part \ref{cor:js.2} above follows from Theorem \ref{thm:ordinary_minimax}.
%

It seems to us that one of the most interesting estimators with ensemble minimaxity
from Part \ref{cor:js.1}
is
\begin{equation}\label{eq:nice_js}
 \left(\bmI-\bmSi\frac{p-2}{(p-2)\sigma^2_1+\|\bmx\|^2}\right)\bmx 
\end{equation}
with the choice $c_1=c_2=p-2$ satisfying \eqref{eq:cor:js.1}.
It is clear that the $i$-th shrinkage factor
\begin{align*}
 1-\frac{(p-2)\sigma^2_i}{(p-2)\sigma^2_1+\|\bmx\|^2}
\end{align*}
is nonnegative for any $\bmx$ and any $\bmSi$, which is a nice property.

\subsection{A generalized Bayes ensemble minimax estimator}
\label{sec:Bayes}
In this subsection, we provide a generalized Bayes ensemble minimax estimator. Following \cite{Strawderman-1971}, \cite{Berger-1976} and
\cite{Maru-Straw-2005}, 
we consider the generalized harmonic prior
\begin{equation}\label{eq:gharmonic}
 \bm{\theta}\mymid \lambda \sim N_p(\bm{0},\lambda^{-1}\bm{\Sigma}\bm{G}^{-1}-\bm{\Sigma}), \ \pi(\lambda) \sim
\lambda^{-2}I_{(0,1)}(\lambda)
\end{equation}
where $\bmG =\mbox{diag}(g_1,\dots,g_p)$ satisfies 
$0<\forall g_i \leq 1$.
Note that for $\bmSi=\bmG=\bmI_p$, the density of $\bmth$ is exactly $\pi(\bmth)=\|\bmth\|^{2-p}$,
since $ \lambda^{-1}\bm{\Sigma}\bm{G}^{-1}-\bm{\Sigma}=\{(1-\lambda)/\lambda\}\bmI_p$ and
\begin{align*}
& \frac{1}{(2\pi)^{p/2}} \int_0^1\left(\frac{\lambda}{1-\lambda}\right)^{p/2}\exp\left(-\frac{\lambda\|\bmth\|^2}{2(1-\lambda)}\right) \lambda^{-2}\rd \lambda \\
 &=\frac{1}{(2\pi)^{p/2}} \int_0^\infty g^{p/2-2}\exp\left(-g\|\bmth\|^2/2\right)\rd g \\
&=\frac{\Gamma(p/2-1)2^{p/2-1}}{(2\pi)^{p/2}}\|\bmth\|^{2-p}.
\end{align*}
The prior $\pi(\bmth)=\|\bmth\|^{2-p}$ is called the harmonic prior and
was originally investigated by \cite{Baranchik-1964} and \cite{Stein-1974}.
\cite{Berger-1980} and \cite{Berger-Strawderman-1996} recommended the use of the prior
\eqref{eq:gharmonic} mainly because it is on the boundary of admissibility.

By the way of \cite{Strawderman-1971},
the generalized Bayes estimator with respect to the prior is given by 
\begin{equation}
\bm{\delta}_{*}= \left(\bmI-\bmG
\frac{\phi_*(z)}{z}\right)\bmx, \ \mbox{ for }z=\bmx\tran \bmG\bmSi^{-1}\bmx
\end{equation}
with
\begin{equation*}
 \phi_*(z)=z\frac{\int_0^1 \lambda^{p/2-1}\exp(-z\lambda/2)\rd\lambda }
{\int_0^1 \lambda^{p/2-2}\exp(-z\lambda/2)\rd\lambda },
\end{equation*}
where $\phi_*(z)$ satisfies the following properties
 \begin{enumerate}[label= H\arabic*]
  \item\label{Bayes_phi_1}
       $\phi_*(z)$ is increasing in $z$.
  \item\label{Bayes_phi_2}
       $\phi_*(z)$ is concave.
  \item \label{Bayes_phi_3}
	$\lim_{z\to\infty}\phi_*(z)=p-2$.
  \item\label{Bayes_phi_4}
       $\phi_*(z)/z$ is decreasing in $z$.
  \item \label{Bayes_phi_5}
	The derivative of $\phi_*(z)$ at $z=0$ is $(p-2)/p$.
 \end{enumerate}
Under the choice 
$\bm{G}=\bm{\Sigma}/\sigma_1^2$ and with the condition of
Corollary \ref{cor:1}, we have a following result.
\begin{thm}\label{final.thm}
\begin{enumerate}
\item \label{final.thm.1}
The estimator $\bm{\delta}_{*}$ is ensemble minimax.
 \item \label{final.thm.2}
The estimator        $\bm{\delta}_{*}$ is ordinary minimax when
\begin{equation*}
 2\left(\sum \sigma_i^4/\sigma_1^4-2\right)\geq p-2.
\end{equation*}
 \item \label{final.thm.3}
The estimator        $\bm{\delta}_{*}$ is conventional admissible.
\end{enumerate}
\end{thm}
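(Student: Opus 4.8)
The plan is to handle the three parts separately, in each case combining the listed properties \ref{Bayes_phi_1}--\ref{Bayes_phi_5} of $\phi_*$ with a result already in the paper: Corollary \ref{cor:1} for Part \ref{final.thm.1}, the Baranchik-type condition \eqref{suffi.condi.ordi.minimax} (Theorem \ref{thm:ordinary_minimax}) for Part \ref{final.thm.2}, and hierarchical-prior admissibility theory for Part \ref{final.thm.3}.

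For Part \ref{final.thm.1}: with $\bmG=\bmSi/\sigma^2_1$ one has $z=\|\bmx\|^2/\sigma^2_1$, so $\bm{\delta}_*$ is precisely the estimator of Corollary \ref{cor:1} with $\phi=\phi_*$, and $\phi_*$ satisfies that corollary's hypotheses --- non-decreasing (\ref{Bayes_phi_1}), non-negative (as $\phi_*(0)=0$), concave (\ref{Bayes_phi_2}), with $\phi_*(z)/z$ non-increasing (\ref{Bayes_phi_4}). It remains to check \eqref{simple}, which I would do by splitting on $\tau$. If $\tau\geq\sigma^2_1$, then $\sigma^2_p+\tau\geq(\sigma^2_1+\tau)/2$ (since $\tau\geq\sigma^2_1>\sigma^2_1-2\sigma^2_p$), so the right side of \eqref{simple} is at least $p-2$, while the left side is at most $p-2$ by \ref{Bayes_phi_1} and \ref{Bayes_phi_3}. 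If $\tau<\sigma^2_1$, then \ref{Bayes_phi_4} and \ref{Bayes_phi_5} (the latter giving $\phi_*(z)/z\to(p-2)/p$ as $z\downarrow0$, using $\phi_*(0)=0$) yield $\phi_*(z)/z\leq(p-2)/p$ for all $z>0$, hence
\begin{align*}
\phi_*\!\left(\frac{p(\sigma^2_p+\tau)}{\sigma^2_1}\right)\leq\frac{(p-2)(\sigma^2_p+\tau)}{\sigma^2_1}\leq\frac{2(p-2)(\sigma^2_p+\tau)}{\sigma^2_1+\tau},
\end{align*}
the last step because $\sigma^2_1+\tau<2\sigma^2_1$. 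These two ranges exhaust $(0,\infty)$, so \eqref{simple} holds and Corollary \ref{cor:1} applies.

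For Part \ref{final.thm.2}: with $g_i=\sigma^2_i/\sigma^2_1$ we get $g_i\sigma^2_i=\sigma^4_i/\sigma^2_1$ and $\max_i g_i\sigma^2_i=\sigma^2_1$, so $h(\bmSi,\bmG)=2(\sum_i\sigma^4_i/\sigma^4_1-2)$, which under the stated hypothesis is $\geq p-2>0$. Since $\phi_*$ is non-decreasing (\ref{Bayes_phi_1}) with $0\leq\phi_*\leq p-2\leq h(\bmSi,\bmG)$ by \ref{Bayes_phi_1} and \ref{Bayes_phi_3}, condition \eqref{suffi.condi.ordi.minimax} is met and Theorem \ref{thm:ordinary_minimax} gives ordinary minimaxity.

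Part \ref{final.thm.3} is where I expect the real work. The key observation is that, with $\bmG=\bmSi/\sigma^2_1$, the prior covariance in \eqref{eq:gharmonic} is $(\sigma^2_1/\lambda)\bmI-\bmSi$, so $\bmX\mymid\lambda\sim N_p(\bmzero,(\sigma^2_1/\lambda)\bmI)$ and the marginal $m(\bmx)\propto\int_0^1\lambda^{p/2-2}\exp(-\lambda\|\bmx\|^2/(2\sigma^2_1))\rd\lambda$ is spherically symmetric in $\bmx$, finite everywhere, and satisfies $m(\bmx)\asymp\|\bmx\|^{2-p}$ as $\|\bmx\|\to\infty$ --- the same borderline tail as the harmonic prior, which is exactly what the power $\lambda^{-2}$ produces. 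Moreover $\phi_*$ is bounded (\ref{Bayes_phi_3}), so $\bm{\delta}_*-\bmX$ is bounded and $\bm{\delta}_*$ has finite risk. I would then appeal to the admissibility analysis of hierarchical priors of \cite{Berger-Strawderman-1996} (a Brown-type recurrence/Blyth argument): admissibility reduces to a tail condition on $m$ which, for a spherically symmetric marginal, takes the one-dimensional form $\int^\infty\{r^{p-1}m(r)\}^{-1}\rd r=\infty$, and this follows from $m(\bmx)\asymp\|\bmx\|^{2-p}$. The main obstacle is carrying this last step through cleanly for the present heteroscedastic loss --- transferring the one-dimensional recurrence criterion from the spherical model --- which is where the boundedness of $\phi_*$ and the fact that $m$ is a finite scale mixture of Gaussians do the needed work, making the regularity hypotheses of the admissibility theorem transparent.
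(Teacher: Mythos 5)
Your Parts \ref{final.thm.1} and \ref{final.thm.2} are correct and follow essentially the paper's own route. For Part \ref{final.thm.1} the paper also verifies \eqref{simple} of Corollary \ref{cor:1} by a two-case split, bounding $\phi_*$ by $p-2$ (via \ref{Bayes_phi_1}, \ref{Bayes_phi_3}) on the large-$\tau$ range and by the linear bound $\phi_*(z)\leq\{(p-2)/p\}z$ on the small-$\tau$ range; the only differences are cosmetic: you split at $\tau\geq\sigma^2_1$ instead of the paper's $\tau\geq\max(0,\sigma^2_1-2\sigma^2_p)$, and you derive the linear bound from \ref{Bayes_phi_4} and \ref{Bayes_phi_5} (monotonicity of $\phi_*(z)/z$ together with its limit $(p-2)/p$ at $z=0$, using $\phi_*(0)=0$), whereas the paper uses concavity \ref{Bayes_phi_2} with \ref{Bayes_phi_5}; both are valid and the closing algebra is the same. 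Part \ref{final.thm.2} in the paper is exactly the appeal to Theorem \ref{thm:ordinary_minimax} with $\bmG=\bmSi/\sigma^2_1$, so $h(\bmSi,\bmG)=2\left(\sum\sigma^4_i/\sigma^4_1-2\right)\geq p-2\geq\phi_*$, which you spell out in more detail than the paper does (absolute continuity of $\phi_*$, i.e.\ \ref{as.1}, is immediate from its closed form).

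Part \ref{final.thm.3} is where you diverge: the paper's entire proof is the citation ``it follows from Theorem 6.4.2 of \cite{Brown-1971},'' while you sketch the substance behind such a citation --- the marginal $m(\bmx)\propto\int_0^1\lambda^{p/2-2}\exp(-\lambda\|\bmx\|^2/(2\sigma^2_1))\,\rd\lambda$ is spherically symmetric with $m(\bmx)\asymp\|\bmx\|^{2-p}$, $\phi_*$ is bounded, and admissibility should follow from a Brown-type recurrence criterion $\int^\infty\{r^{p-1}m(r)\}^{-1}\rd r=\infty$, which you attribute to \cite{Berger-Strawderman-1996}. Your computation of the marginal and its tail is right, but, as you yourself concede, you do not actually close the step of transferring the recurrence/Blyth argument to the heteroscedastic loss; asserting that the boundedness of $\phi_*$ and the Gaussian-mixture structure ``make the regularity hypotheses transparent'' is not a verification of them. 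That transfer is precisely what the paper's citation of Brown's Theorem 6.4.2 is relied upon to cover, so relative to the paper your Part \ref{final.thm.3} is no less complete, but as a self-contained proof it has a gap: you must either verify the hypotheses of Brown (1971, Theorem 6.4.2) (or of the Berger--Strawderman admissibility conditions) explicitly for $N_p(\bmth,\bmSi)$ with loss \eqref{ordinary_squared_error_loss}, or simply cite the theorem as the paper does.
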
 
\begin{proof}\mbox{}
[Part \ref{final.thm.1}] 
Recall that the sufficient condition for ensemble minimaxity is given by
Corollary \ref{cor:1}.
By \ref{Bayes_phi_1}--\ref{Bayes_phi_5}, we have only to check \eqref{simple} in Corollary \ref{cor:1}.

For $\tau\geq \max(0,\sigma^2_1-2\sigma^2_p)$, we have
\begin{align*}
2(p-2)\frac{\sigma_p^2+\tau}{\sigma_1^2+\tau}\geq  p-2. 
\end{align*}
 By the properties \ref{Bayes_phi_1} and \ref{Bayes_phi_3},
\begin{align*}
 \phi_*(p(\sigma^2_p+\tau)/\sigma^2_1) \leq p-2.
\end{align*}
for $\tau\in(0,\infty)$. Hence for $\tau\geq \max(0,\sigma^2_1-2\sigma^2_p)$, it follows that
\begin{align*}
 \phi_*(p(\sigma^2_p+\tau)/\sigma^2_1)\leq 2(p-2)\frac{\sigma_p^2+\tau}{\sigma_1^2+\tau}.
\end{align*}
So it suffices to show
\begin{align*}
 \phi_*(p(\sigma^2_p+\tau)/\sigma^2_1)\leq 2(p-2)\frac{\sigma_p^2+\tau}{\sigma_1^2+\tau}
\end{align*}
when $\sigma^2_1-2\sigma^2_p>0 $ and $ 0<\tau < \sigma^2_1-2\sigma^2_p$.
By the properties \ref{Bayes_phi_2} and \ref{Bayes_phi_5}, we have $ \phi_*(z)\leq \{(p-2)/p\}z$ for all $z\geq 0$.
Then 
\begin{align*}
 2(p-2) \frac{\sigma^2_p+\tau}{\sigma^2_1+\tau}
 -\phi_*(p(\sigma^2_p+\tau)/\sigma^2_1) 
 &\geq  2(p-2) \frac{\sigma^2_p+\tau}{\sigma^2_1+\tau}
 -\frac{p-2}{p}\frac{p(\sigma^2_p+\tau)}{\sigma^2_1} \\
 &=(p-2)(\sigma^2_p+\tau)\left(\frac{2}{\sigma^2_1+\tau}-\frac{1}{\sigma^2_1}\right) \\
 &=\frac{(p-2)(\sigma^2_p+\tau)}{\sigma^2_1(\sigma^2_1+\tau)}
 \left(\sigma^2_1-\tau\right) \\
 &\geq\frac{(p-2)(\sigma^2_p+\tau)}{\sigma^2_1(\sigma^2_1+\tau)}2\sigma^2_p \\
 &\geq 0,
\end{align*}
 which completes the proof.

 [Part \ref{final.thm.2}] It follows from Theorem \ref{thm:ordinary_minimax} in Appendix \ref{sec:ordinary}.

 [Part \ref{final.thm.3}] It follows from Theorem 6.4.2 of \cite{Brown-1971}.
\end{proof}

\subsection{A numerical experiment}
Let $p=10$ and 
\begin{align*}
 \bmSi=\mathrm{diag}(a^9, a^8,\dots, a, 1)
\end{align*}
for $ a=1.01, 1.05, 1.25, 1.5$. Approximately $a^9$ is $1.09, 1.55, 7.45, 38.4$, respectively.
We investigate numerical performance of
two ensemble minimax estimators of the form
\begin{align*}
 \bm{\delta}_\phi=\left(\bmI-\bmSi\frac{\phi(\|\bmx\|^2/\sigma^2_1)}{\|\bmx\|^2}\right)\bmx,
\end{align*}
where the one is the generalized Bayes estimator (GB) with
\begin{equation*}
 \phi_*(z)=z\frac{\int_0^1 \lambda^{p/2-1}\exp(-z\lambda/2)\rd\lambda }
{\int_0^1 \lambda^{p/2-2}\exp(-z\lambda/2)\rd\lambda }
\end{equation*}
and the other is the James-Stein variant (JS) with
\begin{equation*}
 \phi_{\mathrm{JS}}(z)=\frac{(p-2)z}{p-2+z}.
\end{equation*}

As in Part \ref{cor:js.2} of Theorem \ref{cor:js} and
Part \ref{final.thm.2} of Theorem \ref{final.thm},
a sufficient condition for both estimators to be ordinary minimax 
is given by
\begin{equation}\label{suff.minimax.0}
 2\left(\sum_{i=1}^p \sigma_i^4/\sigma_1^4-2\right)=2\left(\sum_{i=1}^p a^{2(i-10)}\right)\geq p-2,
\end{equation}
where the equality is attained by $a\approx 1.066$. Hence
the inequality \eqref{suff.minimax.0} is satisfied by $a=1.01, 1.05$ and is not by $a=1.25, 1.5$.

Table \ref{table:final} provides relative ordinary risk difference given by
\begin{align*}
 1-R(\bmth,\bmde_\phi)/\mathrm{tr}\bmSi
\end{align*}
at 
\begin{align}
 \bmth = m\{\mathrm{tr}\bmSi\}^{1/2} \frac{\bm{1}_{10}}{\sqrt{10}}=
 m\left\{\sum\nolimits_{i=1}^{10} a^{i-1}\right\}^{1/2}\frac{\bm{1}_{10}}{\sqrt{10}}
\end{align}
for $m=0,2, 20,40,60,80,100$.
For both estimators, we see that, for larger $m$ and $a=1.25, 1.5$, the differences are  negative, which implies that
these two estimators are not ordinary minimax for $a=1.25, 1.5$.

Table \ref{table:final_2} provides relative Bayes risk difference given by
\begin{align*}
1-  \bar{R}(\bmde,\tau)/\mathrm{tr}\bmSi
\end{align*}
for $\tau=1,5,20,40,60,80,100$.
We see that, for even $a=1.25, 1.5$, the differences are all positive, which supports
the ensemble minimaxity of two estimators.

In summary these tables support the theory presented in Theorems \ref{cor:js} and \ref{final.thm}.

\begin{table} 
\setlength{\tabcolsep}{2pt}
\caption{Ordinary Risk Difference}
\begin{center}
 \begin{tabular}{cc|ccccccc} 
& $a\backslash m$ & 0 &2& 20 & 40 & 60 & 80 & 100 \\
\midrule 
GB & $1.01$ & $0.79$ & $0.14$ & $1.7\!\times\!  10^{-3}$ & $4.8\!\times\!  10^{-4}$ & \quad $2.5\!\times\!  10^{-4}$ & \quad$1.7\!\times\!  10^{-4}$ & \quad$1.3\!\times\!  10^{-4}$ \\
& $1.05$ &  $0.75$ & $0.14$ & $1.7\!\times\! 10^{-3}$ & $4.3\!\times\!  10^{-4}$ & \quad $2.0\!\times\!  10^{-4}$ & \quad$1.2\!\times\!  10^{-4}$ & \quad$8.0\!\times\!  10^{-5}$ \\
& $1.25$ &  $0.63$ & $0.19$& $1.9\!\times\!  10^{-3}$ & $2.5\!\times\!  10^{-4}$ & $-5.6\!\times\!  10^{-5}$ & $-1.7\!\times\!  10^{-4}$ & $-2.2\!\times\!  10^{-4}$ \\
& $1.5$ &  $0.63$ & $0.27$ & $2.7\!\times\!  10^{-3}$ & $1.6\!\times\!  10^{-4}$ & $-3.0\!\times\!  10^{-4}$ & $-4.6\!\times\!  10^{-4}$ & $-5.4\!\times\!  10^{-4}$ \\
JS & $1.01$ & $0.80$ & $0.14$& $1.7\!\times\!  10^{-3}$ & $4.8\!\times\!  10^{-4}$ & \quad $2.5\!\times\!  10^{-4}$ & \quad$1.7\!\times\!  10^{-4}$ & \quad$1.3\!\times\!  10^{-4}$ \\
& $1.05$ & $0.79$ & $0.14 $& $1.7\!\times\!  10^{-3}$ & $4.3\!\times\!  10^{-4}$ & \quad $2.0\!\times\!  10^{-4}$ & \quad$1.2\!\times\!  10^{-4}$ & \quad$8.0\!\times\!  10^{-5}$ \\
& $1.25$ & $0.72$ & $ 0.19$ & $1.9\!\times\!  10^{-3}$ & $2.5\!\times\!  10^{-4}$ & $-5.6\!\times\!  10^{-5}$ & $-1.7\!\times\!  10^{-4}$ & $-2.2\!\times\!  10^{-4}$ \\
& $1.5$ & $0.71$ & $0.25$& $2.7\!\times\!  10^{-3}$ & $1.6\!\times\!  10^{-4}$ & $-3.0\!\times\!  10^{-4}$ & $-4.6\!\times\!  10^{-4}$ & $-5.4\!\times\!  10^{-4}$ \\
 \end{tabular}
\end{center}
\label{table:final}
\end{table}%

\begin{table} 
\setlength{\tabcolsep}{2pt}
\caption{Bayes Risk Difference}
\begin{center}
 \begin{tabular}{cc|ccccccc} 
& $a\backslash \tau$ & 1 & 5 & 20 & 40 & 60 & 80 & 100 \\
\midrule 
GB & $1.01$ &  $0.429$ & $0.139$& $0.039$ & $0.020$ & $0.013$ & $0.010$ & $0.008$ \\
& $1.05$ & $0.374$ & $0.144$ &$0.042$ & $0.021$ & $0.015$ & $0.011$ & $0.008$ \\
& $1.25$ & $0.105$ & $0.082$ &$0.038$ & $0.021$ & $0.014$ & $0.011$ & $0.009$ \\
& $1.5$  & $0.023$ & $0.022$ &$0.019$ & $0.014$ & $0.012$ & $0.010$ & $0.008$ \\  
  JS& $1.01$ & $0.406$& $0.137$& $0.039$ & $0.020$ & $0.014$ & $0.010$ & $0.008$ \\
& $1.05$ & $0.393$& $0.143$& $0.042$ & $0.022$ & $0.015$ & $0.011$ & $0.009$ \\
& $1.25$ & $0.122$& $0.079$&$0.034$ & $0.020$ & $0.014$ & $0.011$ & $0.009$ \\
& $1.5$  & $0.028$& $0.025$ &$0.018$ & $0.013$ & $0.010$ & $0.008$ & $0.007$ \\  
 \end{tabular}
\end{center}
\label{table:final_2}
\end{table}%

\appendix
\section{Ordinary minimaxity}
\label{sec:ordinary}
We assume
\begin{enumerate}[label=A\arabic*]
 \item \label{as.2} $0<g_i \leq 1$ for any $i$.
 \item \label{as.3} $\phi(z)\geq 0$.    
 \item \label{as.1} $\phi$ is absolutely continuous.
 \item \label{as.4} $\phi$ is monotone non-decreasing.    
\end{enumerate}
Let
\begin{align*}
 h(\bmSi,\bmG)=2\left(\frac{\sum g_i\sigma^2_i}{\max (g_i\sigma^2_i)}-2\right).
\end{align*}
Then we have a following result.
\begin{thm}\label{thm:ordinary_minimax}
 Suppose $\bmG$ satisfies \ref{as.2} and $h(\bmSi,\bmG)>0$. Then
 $\bmde_\phi$ is conventional minimax if $\phi$ satisfies \ref{as.3}, \ref{as.1}, \ref{as.4}
 and $\phi\leq h(\bmSi,\bmG)$. 
\end{thm}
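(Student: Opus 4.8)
The plan is to bound the ordinary risk of $\bmde_\phi$ from above by $\sum_{i=1}^p\sigma^2_i$ for every $\bmth$; since $\bmX$ has constant risk $\sum_i\sigma^2_i$ and is minimax, this gives that $\bmde_\phi$ is minimax. Write $\bmde_\phi=\bmx+\bm{g}(\bmx)$ with $g_i(\bmx)=-g_i\phi(z)x_i/z$ and $z=\sum_j g_jx_j^2/\sigma^2_j$. By \ref{as.1} each $g_i$ is absolutely continuous in $x_i$, and the integrability conditions that make Stein's identity applicable hold for $p\ge3$ (because $\phi$ is bounded by $h(\bmSi,\bmG)$ and $z\ge(\min_j g_j/\sigma^2_j)\|\bmx\|^2$, so $\|\bm{g}(\bmx)\|^2$ and the relevant partial derivatives are dominated by constant multiples of $1/z$, which is integrable in dimension at least three). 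Thus
\begin{align*}
 R(\bmde_\phi,\bmth)-\sum_{i=1}^p\sigma^2_i
 =E_{\bmx}\Bigl[2\sum_{i=1}^p\sigma^2_i\,\frac{\partial g_i}{\partial x_i}(\bmx)+\|\bm{g}(\bmx)\|^2\Bigr].
\end{align*}

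Next I would compute the integrand. Setting $\psi(z)=\phi(z)/z$, so that $g_i(\bmx)=-g_i\psi(z)x_i$ and $\partial z/\partial x_i=2g_ix_i/\sigma^2_i$, differentiation gives $\partial g_i/\partial x_i=-g_i\psi(z)-2g_i^2x_i^2\psi'(z)/\sigma^2_i$, and with $S:=\sum_i g_i^2x_i^2$ the integrand collapses to
\begin{align*}
 2\sum_{i}\sigma^2_i\frac{\partial g_i}{\partial x_i}+\|\bm{g}\|^2
 =\psi(z)^2S-4\psi'(z)S-2\psi(z)\sum_i g_i\sigma^2_i.
\end{align*}
Substituting $\psi'(z)=(z\phi'(z)-\phi(z))/z^2$ rewrites the right-hand side as
\begin{align*}
 \frac{\phi(z)\bigl(\phi(z)+4\bigr)S}{z^2}-\frac{4S\phi'(z)}{z}-\frac{2\phi(z)\sum_i g_i\sigma^2_i}{z};
\end{align*}
the reason for this substitution is that it isolates the term $-4S\phi'(z)/z$, which alone has a fixed sign.

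The last step is the bound. By \ref{as.4}, $\phi'\ge0$ a.e., so $-4S\phi'(z)/z\le0$ and may be dropped. For the two remaining terms, using $\phi\ge0$ from \ref{as.3} together with the elementary inequality $S=\sum_i(g_i\sigma^2_i)(g_ix_i^2/\sigma^2_i)\le\max_i(g_i\sigma^2_i)\,z$ shows the integrand is at most
\begin{align*}
 \frac{\phi(z)}{z}\Bigl(\max_i(g_i\sigma^2_i)\bigl(\phi(z)+4\bigr)-2\sum_i g_i\sigma^2_i\Bigr),
\end{align*}
which is $\le0$ precisely when $\phi(z)\le 2\bigl(\sum_i g_i\sigma^2_i/\max_i(g_i\sigma^2_i)-2\bigr)=h(\bmSi,\bmG)$. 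So under the hypothesis $\phi\le h(\bmSi,\bmG)$ the integrand is non-positive pointwise, hence $R(\bmde_\phi,\bmth)\le\sum_i\sigma^2_i$ for all $\bmth$, and $\bmde_\phi$ is minimax; the assumption $h(\bmSi,\bmG)>0$ enters only in guaranteeing that this range of $\phi$ is non-degenerate.

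The genuinely delicate point is the very first step, justifying Stein's identity: one must verify absolute continuity of $\bm{g}$ across the measure-zero set $\{z=0\}$ and the finiteness of $E[1/z]$, the latter being exactly where $p\ge3$ is used. Everything afterwards is the one-line differentiation above followed by the inequalities $\phi'\ge0$, $\phi\ge0$, and $S\le\max_i(g_i\sigma^2_i)z$; as a consistency check, in the homoscedastic case $\bmSi=\bmG=\bmI_p$ with $\phi\equiv c$ the displayed integrand reduces to $c\bigl(c-2(p-2)\bigr)/z$ and the condition to the classical $c\le2(p-2)$.
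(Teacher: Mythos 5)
Your proof is correct and follows essentially the same route as the paper's: apply Stein's identity (justified by \ref{as.1}), drop the $-4S\phi'(z)/z$ term using \ref{as.4}, and bound $S=\sum_i g_i^2x_i^2\le\max_i(g_i\sigma_i^2)\,z$ to reduce the sign of the integrand to the condition $\phi\le h(\bmSi,\bmG)$. The only addition is your explicit (if slightly informal) discussion of the integrability needed for Stein's identity, which the paper leaves implicit.
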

\begin{proof}
The risk of $\bmde_\phi$ is expanded as
\begin{align*}
 R(\bmde_{\phi},\bmth)&=\sum_{i=1}^p E\left[\left\{\left(1-g_i\frac{\phi(z)}{z}\right)x_i-\theta_i\right\}^2\right] \\
 &=\sum_{i=1}^p E\left[(x_i-\theta_i)^2\right] + E\left[\frac{\phi(z)^2}{z^2}\sum g_i^2x_i^2 \right]
 -2\sum_{i=1}^p E\left[g_i(x_i-\theta_i)x_i\frac{\phi(z)}{z}\right].
\end{align*}
Noting $ z=\sum g_ix_i^2/\sigma_i^2$ and using \citeapos{Stein-1981} identity under Assumption \ref{as.1},
we have
\begin{equation}\label{eq:stein_1}
\begin{split}
 & R(\bmde_{\phi},\bmth)-\sum \sigma^2_i \\ &
 =  E\left[\frac{\phi(z)^2}{z^2}\sum g_i^2x_i^2 \right] 
  -2\sum_{i=1}^p E\left[g_i\sigma^2_i\left\{\frac{\phi(z)}{z}+x_i\frac{2g_ix_i}{\sigma^2_i}
 \left(\frac{\phi'(z)}{z}-\frac{\phi(z)}{z^2}\right)\right\}\right] \\
 &=  E\left[-2\sum g_i\sigma^2_i\frac{\phi(z)}{z}+\sum g_i^2x_i^2
 \left(\frac{\phi(z)^2}{z^2}-4\frac{\phi'(z)}{z}+4\frac{\phi(z)}{z^2}\right)\right].
\end{split} 
\end{equation}
By \ref{as.2}, we have
\begin{equation}\label{eq:max_g}
 \frac{z}{\sum g_i^2x_i^2}
  =\frac{\sum g_i x_i^2/\sigma^2_i}{\sum g_i^2x_i^2}
  =\frac{\sum g_i x_i^2/\sigma^2_i}{\sum \{g_i\sigma^2_i\}\{g_i x_i^2/\sigma^2_i\}}
\geq \frac{1}{\max\left(g_i\sigma^2_i\right)}.
\end{equation}
Then, by Assumptions \ref{as.3} and \ref{as.4}, and \eqref{eq:max_g}, we have 
\begin{align*}
R(\bmde_{\phi},\bmth)-\sum \sigma^2_i
 \leq -E\left[\sum g_i^2x_i^2\frac{\phi(z)}{z^2}\left\{h(\bmSi,\bmG) -\phi(z)
 \right\} \right], 
\end{align*}
which completes the proof.
\end{proof}

\end{document}